\numberwithin{equation}{section}
\theoremstyle{plain}
\newtheorem{theorem}{Theorem}[section]
\newtheorem*{theoremA}{Theorem A}
\newtheorem*{theoremB}{Theorem B}
\newtheorem{Lemma}[theorem]{Lemma}
\theoremstyle{definition}
\newtheorem{example}[theorem]{Example}
\newcommand{\C}{\mathbb{C}}
\newcommand{\N}{\mathbb{N}}
\newcommand{\R}{\mathbb{R}}
\newcommand{\norm}[1]{\lVert #1\rVert}
\newcommand{\abs}[1]{\lvert #1\rvert}
\newcommand{\Abs}[1]{\left\lvert #1\right\rvert}
\newcommand{\sm}{\setminus}
\newcommand{\tn}[1]{\textnormal{#1}}
\newcommand{\cl}[1]{{#1}^\text{\,\rm cl}}
\newcommand{\diam}{\operatorname{diam}}
\newcommand{\dist}{\operatorname{dist}}
\newcommand{\wt}[1]{\widetilde{#1}}
\newcommand{\siT}{\si(T)}                     
\newcommand{\K}{K}                     
\newcommand{\fK}{f(\K)}
\newcommand{\wtK}{\wt\K}
\newcommand{\Ombr}{\Om_{\text{br}}}
\newcommand{\eps}{\epsilon}
\newcommand{\Om}{\Omega}
\newcommand{\de}{\delta}
\newcommand{\la}{\lambda}
\newcommand{\si}{\sigma}
\newcommand{\pt}{\partial}
\newcommand{\Omonebr}{\Omega_{1br}} 
\newcommand{\Omtwobr}{\Omega_{2br}} 
\newcommand{\clOmone}{\cl\Omega_1}
\newcommand{\clOmonebr}{\cl\Omega_{1br}}
\renewcommand{\epsilon}{\varepsilon}
\renewcommand{\phi}{\varphi}
\renewcommand{\ss}{\subset}
\begin{document}

\title[Resolvent estimates for a function of a linear operator]{Resolvent estimates \\ for a function of a linear operator}

\author[G. Bello]{Glenier Bello}
\address{Departamento de Matem\'{a}ticas e
Instituto Universitario de Matem\'{a}ticas y Aplicaciones\\
Universidad de Zaragoza\\
50009 Zaragoza\\
Spain}
\email{gbello@unizar.es}
\author[D. V. Yakubovich]{Dmitry Yakubovich}
\address{
Departamento de Matem\'aticas\\
Universidad Aut\'onoma de Madrid\\
Cantoblanco, 28049 Madrid, Spain}
\email {dmitry.yakubovich@uam.es}
\date{\today}

\subjclass[2020]{47A10, 47A30}
\keywords{Resolvent growth}

\begin{abstract}
Let $T$ be a bounded linear operator on a Banach space and $f$ an analytic function, defined on the spectrum of $T$. We discuss the relations between the rate of growth of the resolvent of $T$ and of $f(T)$. 
\end{abstract}
\thanks{Both authors acknowledge the support of the Spanish Ministry for Science and Innovation under Grant PID2022-137294NB-I00, DGI-FEDER.  G. Bello has also been partially supported by  PID2022-138342NB-I00 for \emph{Functional Analysis Techniques in Approximation Theory and Applications (TAFPAA)} and by Project E48\_23R, D.G. Arag\'{o}n}
\maketitle
\section{Introduction}

Let $T$ be a bounded linear operator on a Banach space $X$. 
As usual, let $\sigma(T)$ denote the spectrum of $T$. 
Given a compact set $K$ containing $\sigma(T)$ and a real number $s\ge1$, 
we say that $T$ has \emph{resolvent growth of order $s$ near $K$} if 
\begin{equation}
	\label{eq:res-growth}
	\norm{(T-\lambda)^{-1}}\le
 C \dist(\lambda,\K)^{-s}, \quad  
	\lambda\in D(a,R)\sm\K,  
\end{equation}
where $C$ is a constant and 
$D(a,R)=\{\la\in\C: |\la-a|<R\}$ is an open disc containing $K$.  
The growth behavior is really determined by those $\lambda$ close to $K$, 
so this definition does not depend on the choice of $D(a,R)$. 
An equivalent way to write \eqref{eq:res-growth} is 
\[
\norm{(T-\lambda)^{-1}}\le C' \max\{\dist(\lambda,\K)^{-s}, 
\dist(\lambda,\K)^{-1}\}, 
\quad 
\lambda\in\C\sm \K. 
\]
When $\K=\siT$ it is said that $T$ has \emph{resolvent growth of order $s$}. 

Let $\Omega$ be an open set containing $K$ and 
let $f\colon\Omega\to\C$ an analytic function. 
This paper is devoted to the study of the relation between the resolvent growth of $T$ near $K$
and the resolvent growth of $f(T)$ near $f(K)$, the latter operator being defined by the Riesz--Dunford calculus. 
Consequently, we will assume throughout this paper 
that $f$ is not constant on any connected component of $\Om$. 
Notice also that $\Omega$ can be assumed as small as needed (containing $K$), 
so in particular we can suppose that $\Omega$ is bounded. 

The most relevant case in 
our results is when the compact set $K$ 
is precisely $\sigma(T)$. We work with a general $K$ because the proofs are 
just the same. 

\begin{theoremA}[Bakaev \cite{Bakaev98}]
\label{Bakaev_thm}
	If $T$ has resolvent growth of order $s$ near $\K$, then $f(T)$ also has resolvent growth of order $s$ near $\fK$.  
\end{theoremA}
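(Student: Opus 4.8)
The plan is to fix $\mu \notin f(K)$ close to $f(K)$ and write $(f(T)-\mu)^{-1}$ via the Riesz--Dunford formula, then estimate the contour integral using the hypothesis \eqref{eq:res-growth}. Concretely, I would start from
\[
\big(f(T)-\mu\big)^{-1} = \frac{1}{2\pi i}\int_{\Gamma} \frac{1}{f(z)-\mu}\,(z-T)^{-1}\,dz,
\]
where $\Gamma$ is a cycle in $\Omega$ surrounding $K$. The point is that $1/(f(z)-\mu)$ is analytic in a neighborhood of $K$ (since $f(z)-\mu$ has no zeros near $K$ when $\mu$ is close enough to $f(K)$ but outside it), so $\Gamma$ can be deformed to hug $K$: taking $\Gamma$ to lie in the region where $\dist(z,K) \le c\,\dist(\mu, f(K))$ for a suitable constant (using a local Lipschitz-type bound $|f(z)-\mu| \gtrsim \dist(z,K) + \dist(\mu,f(K))$ or at worst $|f(z)-\mu|\gtrsim \dist(\mu,f(K))$ away from the zeros of $f-\mu$, together with the fact that $f$ is locally finite-to-one and nonconstant so its zero sets are controlled).

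The key steps, in order, would be: (1) reduce to $\mu$ within a fixed small distance of $f(K)$, since for $\mu$ bounded away from $f(K)$ the resolvent of $f(T)$ is bounded and there is nothing to prove; (2) for such $\mu$, analyze the sublevel set $\{z \in \Omega : |f(z)-\mu| < \eta\}$ and show it is contained in an $O(\dist(\mu,f(K)))$-neighborhood of $K$ — this is where the nonconstancy of $f$ on each component enters, guaranteeing $f$ is an open, locally proper map so that $|f(z)-\mu|$ controls $\dist(z, f^{-1}(\mu))$ from below up to a power, but since we only need the order-$s$ conclusion and not the multiplicity, a crude bound suffices; (3) choose $\Gamma$ inside this sublevel set at distance comparable to $\dist(\mu, f(K))$ from $K$, with length $O(\dist(\mu,f(K)))$ after a suitable finite decomposition near the preimages; (4) bound the integrand: $|f(z)-\mu|^{-1} \le C$ off a neighborhood of $f^{-1}(\mu)$ and $\|(z-T)^{-1}\| \le C\dist(z,K)^{-s} \le C\dist(\mu,f(K))^{-s}$ on $\Gamma$; (5) multiply by the length of $\Gamma$ to get $\|(f(T)-\mu)^{-1}\| \le C\dist(\mu, f(K))^{-s} \cdot \dist(\mu,f(K))^{\text{(from near the zeros)}}$, and check the exponents combine to at most $\dist(\mu, f(K))^{-s}$.

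The main obstacle is the behavior of $1/(f(z)-\mu)$ near the preimages $f^{-1}(\mu) \cap K$: there $f(z)-\mu$ may vanish to high order, so $|f(z)-\mu|^{-1}$ blows up and one cannot simply take $\Gamma$ to pass through a zero. The resolution is to route $\Gamma$ around each such preimage along small circles of radius $\sim \dist(\mu, f(K))^{1/m}$ (with $m$ the local multiplicity) — on such a circle $|f(z)-\mu| \gtrsim \dist(\mu, f(K))$ while $\dist(z,K)^{-s}$ contributes at most $\dist(\mu,f(K))^{-s/m}$ and the arc length is $\dist(\mu,f(K))^{1/m}$, and one verifies $-s/m + 1/m \le -s$ fails in general, so in fact one must be more careful and combine the contributions so that the total still gives order $s$; the cleanest route is to observe that $f$ restricted to a small disc is conjugate to $z \mapsto z^m$, reduce to that model case by a biholomorphic change of variable, and compute the resulting integral directly, which yields exactly order $s$. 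This local normal-form reduction, patched together over the finitely many branch points by a partition of the contour, is the crux of the argument.
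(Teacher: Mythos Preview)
Your plan has a genuine gap at step~(3): a contour $\Gamma$ surrounding $K$ at distance comparable to $d:=\dist(\mu,f(K))$ does \emph{not} have length $O(d)$ --- its length is at least of the order of the diameter of $K$. With only the crude bounds $|f(z)-\mu|^{-1}\lesssim d^{-1}$ and $\|(z-T)^{-1}\|\lesssim d^{-s}$ available on such a $\Gamma$, the raw contour integral yields at best $O(d^{-s-1})$, one full power worse than required. Splitting $\Gamma$ into pieces near and far from the preimages of $\mu$ does not rescue this: the preimages $f^{-1}(\mu)$ cluster near $\widetilde K:=f^{-1}(f(K))\cap\cl\Omega_{1br}$ rather than near $K$ (so step~(2) is also inaccurate as stated), and along a portion of $\Gamma$ squeezed between $K$ and a nearby regular preimage the integral $\int |f(z)-\mu|^{-1}\,|dz|$ already picks up a logarithm. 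The branch-point difficulty you flag is real but secondary to this basic length/exponent mismatch, and your proposed normal-form fix does not address it.

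The paper bypasses any contour estimate by an algebraic decomposition. Writing $f(\lambda)-z=\varphi_z(\lambda)\prod_{j}(\lambda-\lambda_{z,j})$ with $\varphi_z$ zero-free and $\|1/\varphi_z\|_{H^\infty}$ uniformly bounded (Lemma~\ref{lem:HinftyNorm}), a partial-fraction expansion and the multiplicativity of the Riesz--Dunford calculus give the exact identity
\[
(f(T)-z)^{-1}=\varphi_z(T)^{-1}\sum_{j=1}^{k(z)} c_{z,j}\,(T-\lambda_{z,j})^{-1},\qquad c_{z,j}=\prod_{k\ne j}(\lambda_{z,j}-\lambda_{z,k})^{-1}.
\]
Each summand is then estimated directly from the hypothesis on $T$: away from branch points the $c_{z,j}$ are bounded and $\|(T-\lambda_{z,j})^{-1}\|\lesssim\dist(\lambda_{z,j},K)^{-s}\asymp\dist(z,f(K))^{-s}$ via Lemma~\ref{lem:bilipsch}; near a branch point $\xi$ of order $m$ one has $|c_{z,j}|\asymp|\lambda_{z,j}-\xi|^{-(m-1)}$ while $\dist(z,f(K))\lesssim|\lambda_{z,j}-\xi|^{m-1}\dist(\lambda_{z,j},K)$, and since $s\ge 1$ these combine to give exactly $d^{-s}$. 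If you insist on the integral picture, this is precisely what you obtain by deforming $\Gamma$ outward and collecting the residues of $(f(\cdot)-\mu)^{-1}(\cdot-T)^{-1}$ at the $\lambda_{z,j}$: it is the residues, not the raw integral, that carry the sharp bound.
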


Bakaev formulated this theorem for rational functions $f$.  
We will give a complete proof for analytic functions.

By a Lipschitz curve in $\C$ we mean a curve 
that can be parametrized as $z=\zeta\cdot(t+if(t))$, 
$t\in I$, where $I$ is a finite interval,  $f\colon I\to\R$ a Lipschitz function and $\zeta$ a complex constant of modulus $1$. 
Here we will prove the following partial converse of this result.

For a set $G\subset \R^2$ and $\si>0$, we put
	\[
	[G]_\si:=\{y\in \R^2\sm G: \; 0<\dist(y,G)\le \si\}. \quad 
	\]
	Let $p\in (0,2)$ be a real number and let $G\subset \C$ be a compact set.  
	We say that $G$ is \emph{$p$-admissible} if there is a constant $C$ such that for any $a\in \R^2$ and any $R, \si>0$, 
	\begin{equation}  
		\label{eq:p-admiss}
		m\big([G]_\si\cap D(a,R)\big)\le C\si^{2-p} R^p. 
	\end{equation}

It is clear that each Lipschitz curve is $1$-admissible, but not vice versa. If the  
Assouad dimension of a compact set is less than two, then this set is $p$-admissible for some $p<2$. We refer to \cite{BelloYak-curves} for details.

\begin{theorem}
	\label{converse_thm}
Let $T$ be a Banach space operator.  
Suppose $f(T)$ has resolvent growth of order $s$ near $\fK$, where $\K\supset \siT$, and that 
$f'$ does not vanish on $\K$. 

\begin{itemize}
	
	\item[(i)] If $K$ is contained 
in a finite union of Lipschitz curves, then $T$ has resolvent growth of order $s$ near $K$. 

    	\item[(ii)] If $K$ is $p$-admissible for some $p\in(0,2)$, then for any $\si>s$, $T$ has $\si$th order growth near $K$. 
\end{itemize}    	
\end{theorem}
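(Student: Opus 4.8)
The plan is to transfer the resolvent estimate for $f(T)$ back to $T$ by means of the Riesz--Dunford calculus, and to pay for the non-injectivity of $f$ with the geometric smallness of $K$. First one reduces to $\lambda$ close to $K$: on any compact subset of $D(a,R)$ at a fixed positive distance from $\sigma(T)$ the norm $\|(T-\lambda)^{-1}\|$ is bounded, so fix a small $\rho_0>0$ and consider $\lambda$ with $0<\dist(\lambda,K)\le\rho_0$. The algebraic core is the factorization $f(z)-f(\lambda)=(z-\lambda)\,h_\lambda(z)$, where $h_\lambda(z)=\big(f(z)-f(\lambda)\big)/(z-\lambda)$ is holomorphic on $\Omega$ (the singularity at $z=\lambda$ is removable); by multiplicativity of the calculus, $f(T)-f(\lambda)=(T-\lambda)\,h_\lambda(T)$. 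Writing $h_\lambda(T)=\tfrac1{2\pi i}\int_{\Gamma_0}h_\lambda(z)(T-z)^{-1}\,dz$ over a fixed contour $\Gamma_0\subset\Omega$ encircling $K$ and bounded away from it yields $\sup_\lambda\|h_\lambda(T)\|=:C_0<\infty$, the supremum over all $\lambda$ near $K$ (the numerator is uniformly bounded, the denominator bounded below on $\Gamma_0$). Hence, whenever $f(\lambda)\notin\sigma(f(T))=f(\sigma(T))$, both factors are invertible and
\[
\|(T-\lambda)^{-1}\|\le C_0\,\|(f(T)-f(\lambda))^{-1}\|\le C_0C\,\dist\!\big(f(\lambda),f(K)\big)^{-s},
\]
the last step by the resolvent-growth hypothesis on $f(T)$.

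Next comes the geometric input. Because $f'$ is non-vanishing on the compact set $K$, there are $\rho_0>0$ and $0<c_1\le C_1$ so that $f$ restricts to a bi-Lipschitz biholomorphism of $D(\kappa,2\rho_0)$ onto its image, with constants $c_1,C_1$, for every $\kappa\in K$. Covering $K$ by finitely many such discs shows that $f(K)$ is again contained in finitely many Lipschitz curves (case (i)), resp.\ is again $p$-admissible (case (ii)); moreover $f^{-1}(f(K))$ is, near $K$, a finite union of bi-Lipschitz copies of pieces of $f(K)$, and so inherits the same smallness. If $f$ were injective on a neighbourhood of $K$ we would have $\dist(f(\lambda),f(K))\asymp\dist(\lambda,K)$ and be done. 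The obstruction is ``folding'': distinct points of $K$ sharing an $f$-value, which can make $\dist(f(\lambda),f(K))$ much smaller than $\dist(\lambda,K)$, so the displayed bound need not yet give $\dist(\lambda,K)^{-s}$. The set of $\lambda$ near $K$ where this happens lies inside $f^{-1}(f(K))$ thickened at scale $\dist(\cdot,K)$, hence inside a neighbourhood of finitely many Lipschitz curves (resp.\ inside a $p$-admissible set).

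To absorb the folding locus one uses that $\lambda\mapsto(T-\lambda)^{-1}$ is analytic, so $\lambda\mapsto\|(T-\lambda)^{-1}\|$ is subharmonic and $\|(T-\lambda)^{-1}\|\le\frac1{\pi r^{2}}\int_{D(\lambda,r)}\|(T-w)^{-1}\|\,dm(w)$ with $r=\tfrac12\dist(\lambda,K)$. In case (i), $K$, hence $f(\sigma(T))$, hence $f^{-1}(f(\sigma(T)))$, is $m$-null, so the first-step bound holds for a.e.\ $w\in D(\lambda,r)$; pushing the integral forward by $f$ (a biholomorphism on $D(\lambda,r)$ with $|f'|\asymp1$) bounds it by $\lesssim\int_{D(f(\lambda),C_1r)}\dist(u,f(K))^{-s}\,dm(u)$, and the geometric smallness of $f(K)$ controls the part of this integral away from $f(K)$, the part within distance $\varepsilon\dist(\lambda,K)$ of $f(K)$ having measure $\lesssim\varepsilon\dist(\lambda,K)^{2}$ and being handled by iterating the mean-value inequality at comparable scale. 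Dividing by $\pi r^{2}$ gives $\|(T-\lambda)^{-1}\|\lesssim\dist(\lambda,K)^{-s}$ a.e., and then on all of $D(a,R)\setminus K$ by continuity of both sides off $\sigma(T)$ together with the fact that the exceptional set is, near $K$, nowhere dense. In case (ii) the $p$-admissibility of $f(K)$ only delivers $\dist(\cdot)^{2-p}$-type area estimates; summing over dyadic scales of $\dist(u,f(K))$ still converges, but at the cost of an arbitrarily small loss, yielding the exponent $\sigma>s$ rather than $s$.

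The hard part will be this last step: the pointwise bound $\|(T-\lambda)^{-1}\|\le C\dist(f(\lambda),f(K))^{-s}$ overcounts near the folding locus, and for $s\ge1$ the integral $\int\dist(u,f(K))^{-s}$ over a neighbourhood of a curve diverges, so one cannot integrate the pointwise bound directly. One must localise to discs of size $\asymp\dist(\lambda,K)$, excise a sub-neighbourhood of the folding curve of measure $\lesssim\varepsilon\dist(\lambda,K)^{2}$, treat the excised piece by a Moser-type iteration of the subharmonic mean-value inequality, and track how the excision parameter $\varepsilon$ feeds into the final constant. This is precisely where (i) and (ii) part ways: for a finite union of Lipschitz curves the measure bounds are sharp enough to close the iteration with the exponent $s$, whereas for a general $p$-admissible $K$ the iteration only closes after conceding the loss $\sigma-s$.
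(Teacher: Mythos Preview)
Your overall strategy coincides with the paper's. The factorization $f(z)-f(\lambda)=(z-\lambda)h_\lambda(z)$ and the uniform bound on $\|h_\lambda(T)\|$ are exactly the content of the paper's Lemma~\ref{lem:BoundResolvent} (there $h_\lambda$ is called $\psi_\lambda$), and together with the hypothesis on $f(T)$ they give
\[
\|(T-\lambda)^{-1}\|\lesssim\dist(f(\lambda),f(K))^{-s}\asymp\dist(\lambda,\wt K)^{-s},\qquad \wt K:=f^{-1}(f(K))\cap\cl{\Om}_{1},
\]
the last equivalence being the paper's Lemma~\ref{lem:bilipsch}. So far your argument is correct and identical to the paper's.

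The gap is in the final step, passing from the $\wt K$-estimate to the $K$-estimate. The paper does not carry out any iteration here; it isolates this passage as a separate black-box statement (Lemma~\ref{Lemma-resolv-curves}) and proves it by quoting \cite[Theorem 2.5 and Corollary 2.8]{BelloYak-curves}, results about subharmonic functions bounded by $\dist(\cdot,A\cup B)^{-s}$ with $A$ geometrically small. Your proposal attempts this step directly, but as written it is not a proof. You correctly observe that the naive integral $\int_{D(f(\lambda),C_1 r)}\dist(u,f(K))^{-s}\,dm(u)$ diverges for $s\ge1$ near a curve, and you propose to ``excise a sub-neighbourhood \dots\ and treat the excised piece by a Moser-type iteration of the subharmonic mean-value inequality''. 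But the iteration is never set up: you do not specify the sequence of scales, you do not show the recursion closes, and you do not explain why in case~(i) the exponent survives while in case~(ii) one must concede $\sigma>s$. Phrases like ``being handled by iterating the mean-value inequality at comparable scale'' and ``summing over dyadic scales \dots\ still converges'' are descriptions of what one hopes will happen, not arguments. This is exactly the nontrivial content of \cite{BelloYak-curves}; to make your proof complete you must either invoke that result (as the paper does via Lemma~\ref{Lemma-resolv-curves}) or reproduce the full iteration with explicit control of constants.

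One minor point: the subharmonic function to which the cited results apply is $\log\|(T-\lambda)^{-1}\|$, not $\|(T-\lambda)^{-1}\|$ itself; both are subharmonic on the resolvent set, but the logarithmic version is what makes the multiplicative estimates in the iteration tractable.
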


The statement of Theorem~\ref{converse_thm} is false if $K=\siT$ and  $f'$ has zeros on $\si(T)$ 
(just take as $T$ a $2\times 2$ Jordan block, whose diagonal element is a zero of $f'$). We will see in Example~\ref{ex:dense-sp} that the statement of Theorem~\ref{converse_thm} 
may fail if $K=\siT$ and $\siT$ does not satisfy the above geometric condition. 

Theorem~\ref{converse_thm} will rely on the following fact, which will be deduced from the results of~\cite{BelloYak-curves}.  

\begin{Lemma}
	\label{Lemma-resolv-curves}
	Let $T$ be a Banach space operator and $K, \wt K\subset\C$ be  compact sets such that $\sigma(T)\subset K\subset \wt K$ 
	and 
	\begin{equation}
		\label{eq:res-growth-K}
		\norm{(T-\lambda)^{-1}}\le
		C \dist(\lambda, \wt K)^{-s}, \quad  
		\lambda\in D(b,R)\sm \wt K,  
	\end{equation}
	where $D(b,R)$ is an open disc containing $\wt K$. 

\begin{itemize}
	\item[(i)] If $\wt K$ is contained 
	in a finite union of Lipschitz curves, then $T$ has resolvent growth of order $s$ near $K$. 
	
	\item[(ii)] If $\wt K$ is $p$-admissible for some $p\in(0,2)$, then for any $\si>s$, $T$ has $\si$th order growth near $K$. 
\end{itemize}    	
\end{Lemma}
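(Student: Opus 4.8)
The plan is to reduce the statement to a purely function-theoretic/measure-theoretic fact about subharmonic or operator-valued functions on the complement of $\wt K$, which is exactly the kind of result established in \cite{BelloYak-curves}. The essential point is that we are given a resolvent bound $\dist(\lambda,\wt K)^{-s}$ that controls the norm only in terms of distance to the larger set $\wt K$, and we want to upgrade it, on the smaller set $K$, to a bound in terms of $\dist(\lambda, K)$. Concretely, I would fix $\lambda_0 \in D(b,R)\sm K$ close to $K$, set $d = \dist(\lambda_0, K)$, and consider the vector-valued analytic function $g(\lambda) = (T-\lambda)^{-1}$ on the disc $D(\lambda_0, d)$ (or a fixed fraction of it), which does not meet $\siT$ hence is a genuine analytic function there. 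The hypothesis \eqref{eq:res-growth-K} gives $\|g(\lambda)\| \le C\dist(\lambda,\wt K)^{-s}$ on most of this disc, and the geometry of $\wt K$ — being thin, either a finite union of Lipschitz curves or $p$-admissible — is what forces $\dist(\lambda,\wt K)$ to be comparable to $\dist(\lambda, K)$ except on a controllably small exceptional subset.

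The key steps, in order, are: (1) reduce to estimating $\|(T-\lambda_0)^{-1}\|$ for $\lambda_0$ at distance $d$ from $K$ with $d$ small; (2) apply a mean-value / Cauchy-integral type bound on a disc $D(\lambda_0, c\,d)$ for a suitable $c\in(0,1)$, expressing $\|(T-\lambda_0)^{-1}\|$ (or its logarithm) in terms of an average of $\log\|(T-\lambda)^{-1}\|$ over that disc or a circle in it, using subharmonicity of $\lambda \mapsto \log\|(T-\lambda)^{-1}\|$; (3) split the domain of integration into the part where $\dist(\lambda,\wt K)\gtrsim \dist(\lambda,K)$, on which \eqref{eq:res-growth-K} directly yields the desired power of $d$, and the ``bad'' part $B$ where $\dist(\lambda,\wt K)$ is much smaller, i.e.\ $\lambda$ lies in some $[\wt K]_\sigma$ with $\sigma$ small; (4) estimate the contribution of $B$: its measure is controlled by the Lipschitz or $p$-admissibility hypothesis on $\wt K$ via \eqref{eq:p-admiss} (with $G=\wt K$), while on $B$ one still has the a priori bound $\|(T-\lambda)^{-1}\|\le C\sigma^{-s}$, and integrating $\log(C\sigma^{-s})$ against the small measure of the $\sigma$-layers gives a convergent contribution — in case (i) one recovers the clean exponent $s$, while in case (ii) the logarithmic loss from the $p$-admissible (rather than rectifiable) geometry is absorbed by allowing any exponent $\sigma > s$. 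This is precisely the mechanism already packaged in \cite{BelloYak-curves}, so in the write-up I would cite the relevant theorem there and verify that its hypotheses are met, rather than redo the harmonic analysis.

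The main obstacle I anticipate is step (4), the treatment of the exceptional set: one must be careful that the a priori bound $\|(T-\lambda)^{-1}\|\lesssim \dist(\lambda,\wt K)^{-s}$ remains valid \emph{inside} the disc $D(\lambda_0, c\, d)$ — which requires this disc to stay inside $D(b,R)$ and to avoid $\wt K$ except on a set where the bound is still applicable — and that the quantitative decomposition of $D(\lambda_0, c\,d)$ into dyadic $\sigma$-layers of $\wt K$ with $\sigma$ ranging down to $0$ produces a sum $\sum_k (\text{measure of layer } \sim 2^{-k})\cdot \log(2^{ks})$ that converges, which is exactly where the admissibility exponent $p<2$ (rather than $p=2$) is used and why in part (ii) we cannot keep the exponent $s$ but must inflate it to $\sigma>s$. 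A secondary technical point is justifying the subharmonic mean-value estimate for the operator-norm of the resolvent on a Banach space (as opposed to Hilbert space), but $\lambda\mapsto \log\|(T-\lambda)^{-1}\|$ is subharmonic in the required generality, so this should go through without change. Since Lemma~\ref{Lemma-resolv-curves} is explicitly flagged as a consequence of \cite{BelloYak-curves}, I expect the honest proof to be short: restate the external result in the present notation, set $G = \wt K$, and observe that \eqref{eq:res-growth-K} is the input hypothesis of that result while the two geometric cases correspond exactly to the two conclusions.
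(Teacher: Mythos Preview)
Your proposal is correct and follows essentially the same route as the paper: observe that $u(\lambda)=\log\|(T-\lambda)^{-1}\|$ is subharmonic on $\C\sm\siT$, and then invoke the results of \cite{BelloYak-curves} rather than reproving the harmonic-analytic estimate. The paper's write-up is indeed as short as you predict; the only cosmetic difference is that the paper explicitly decomposes $\wt K=A\cup B$ with $A=\cl{(\wt K\sm K)}$ and $B=K$ so as to match the hypotheses of \cite[Corollary~2.8 and Theorem~2.5]{BelloYak-curves} (the geometric condition is imposed on $A$ there), whereas you set $G=\wt K$ directly---but since $A\subset\wt K$ inherits the Lipschitz or $p$-admissibility hypothesis, this is only a matter of notation.
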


We recall that $T_0\in L(X)$ is called {\it scalar spectral} if there is a projection-valued spectral measure $E(\cdot)$, defined on all Borel subsets of $\C$, such that 
\[
T_0=\int_{\C} \la\, dE(\la). 
\]
In the Hilbert space case, this is equivalent to $T_0$ being similar to a normal operator.  
An operator $T\in L(X)$ is called {\it spectral} 
if it has a representation $T=T_0+N$, where 
$T_0$ is scalar spectral and $N$ is a quasinilpotent operator such 
that $NE(\de)=E(\de)N$ for any Borel set $\de\subset \C$. 
We refer to \cite{DunfSchw3} for the basics on spectral operators and to \cite{Rad91} for 
their simplified definition. 

Each scalar spectral operator has first order growth of the resolvent. More generally, if $T$ is scalar and its quasinilpotent part $N$
satisfies $N^{k}=0$ for some $k\in\N$ (i.e., $N$ is nilpotent), then the resolvent of $T$ has $k$th order growth. 
Therefore, the following result  can be seen as a counterpart of the above Theorem~\ref{converse_thm}. 

\begin{theoremB}[Apostol \cite{Apostol68}]
	\label{Apostol_thm}
	If $f(T)$ is (scalar) spectral and $f'$ does not vanish on $\si(T)$, then $T$ is (scalar) spectral. 
\end{theoremB}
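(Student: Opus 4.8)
The plan is to reduce everything to the case in which $f$ is \emph{univalent} in a neighbourhood of $\sigma(T)$. In that case, writing $g$ for the (analytic) inverse of $f$, defined on a neighbourhood of $\sigma(f(T))$, the composition rule of the Riesz--Dunford calculus gives $T=g(f(T))$, and one only has to invoke the classical theorem (Dunford--Schwartz) that the analytic functional calculus of a (scalar) spectral operator by an analytic univalent function is again a (scalar) spectral operator. So the real content is the following: \emph{since $f'$ does not vanish on $\sigma(T)$, one can split $X$ into a finite topological direct sum of closed, $T$-invariant, complemented (indeed $T$-hyperinvariant) subspaces on each of which $f$ is univalent near the relevant part of the spectrum.} I would build this decomposition in two stages, using first the resolution of the identity of $f(T)$, then the Riesz idempotents of $T$ itself.

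First I would set $S=f(T)$ and let $E(\cdot)$ be its resolution of the identity, supported on $\sigma(S)=f(\sigma(T))$. Since $f'$ is non-vanishing on the compact set $\sigma(T)$, there is $\delta>0$ such that $f$ is injective (biholomorphic onto its image) on $D(z,\delta)$ for every $z\in\sigma(T)$; pick a finite subcover $D_1,\dots,D_m$ of $\sigma(T)$ by such discs. Then $W_k:=f(D_k)$ form an open cover of $\sigma(S)$; passing to a Borel partition $\sigma(S)=\bigsqcup_k\widetilde W_k$ subordinate to it (so $\widetilde W_k\subset W_k$), the projections $F_k:=E(\widetilde W_k)$ satisfy $\sum_kF_k=I$, $F_kF_l=0$ for $k\ne l$, and — because $T$ commutes with $S$, hence with $E(\cdot)$ — commute with $T$. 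Thus $X=\bigoplus_kF_kX$ with each summand $T$-invariant; putting $T^{(k)}=T|_{F_kX}$ and $S^{(k)}=S|_{F_kX}$, the resolvent of $T$ restricts, so $S^{(k)}=f(T^{(k)})$, and the spectral mapping theorem together with $\sigma(S^{(k)})\subset\overline{\widetilde W_k}$ yields $\sigma(T^{(k)})\subset\sigma(T)\cap f^{-1}\big(\overline{W_k}\big)$.

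Second I would establish the geometric lemma — a consequence of $f'\ne0$ on the compact set $\sigma(T)$ (a local-homeomorphism/covering argument), valid once $\delta$ is small enough — that, for each $k$, the compact set $\sigma(T)\cap f^{-1}(\overline{W_k})$ is a finite union of compacta lying pairwise at positive distance, each contained in a disc on which $f$ is injective. Then $\sigma(T^{(k)})$ breaks into relatively clopen pieces and the associated Riesz idempotents $G_{k,i}=\frac1{2\pi i}\oint(z-T^{(k)})^{-1}\,dz$ refine the splitting: $F_kX=\bigoplus_iG_{k,i}F_kX$. Since each $G_{k,i}$ commutes with everything commuting with $T^{(k)}$, it commutes with $E(\cdot)|_{F_kX}$; hence $f(T)$ restricted to $G_{k,i}F_kX$ is the restriction of a spectral operator to a subspace invariant under its resolution of the identity, so it is again (scalar) spectral. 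Relabelling these finitely many subspaces as $X_1,\dots,X_n$ with $T_\nu=T|_{X_\nu}$: by construction $f$ is univalent on an open disc $\Delta_\nu\supset\sigma(T_\nu)$, so with $g_\nu:=(f|_{\Delta_\nu})^{-1}$ one gets $T_\nu=(g_\nu\circ f)(T_\nu)=g_\nu\big(f(T)|_{X_\nu}\big)$, which by the cited theorem is (scalar) spectral. Finally a finite direct sum $T=\bigoplus_\nu T_\nu$ of (scalar) spectral operators is (scalar) spectral — the block-diagonal resolution of the identity $\delta\mapsto\bigoplus_\nu E_{T_\nu}(\delta)$ works, and the direct sum of the quasinilpotent parts (which commute, acting on complementary summands) is quasinilpotent and commutes with it — which proves the theorem.

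The hard part, I expect, is exactly the second stage: proving that $f^{-1}(\overline{W_k})\cap\sigma(T)$ genuinely separates into well-isolated ``sheets'', each sitting inside an injectivity disc (so that Riesz idempotents of $T$ are available even when $\sigma(T)$ is connected), and verifying that restricting $f(T)$ to the resulting $T$-hyperinvariant pieces preserves (scalar) spectrality. This is what makes it necessary to use the Riesz calculus of $T$ \emph{inside} each $F_kX$, rather than merely pushing $E(\cdot)$ forward along $f$: the push-forward fails precisely because $f$ need not be one-to-one on $\sigma(T)$ (already for $\sigma(T)$ a circle and $f(z)=z^2$, where $f$ is two-to-one).
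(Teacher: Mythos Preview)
The paper does not prove Theorem~B; it is simply quoted from Apostol's 1968 paper and set alongside Theorem~\ref{converse_thm} for context. So there is no argument in the present paper against which to compare yours.

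On the merits of your outline: the strategy---reduce to the univalent case by splitting $X$ into finitely many $T$-invariant summands, then write $T_\nu=g_\nu\big(f(T)|_{X_\nu}\big)$ and invoke the fact that the Riesz--Dunford calculus preserves (scalar) spectrality---is sound and is essentially how Apostol proceeds. Two points deserve more care than you give them. First, the step ``$T$ commutes with $S$, hence with $E(\cdot)$'' is not a tautology: that every bounded operator commuting with a spectral operator also commutes with its resolution of the identity is itself a theorem (Dunford--Schwartz, Part~III, Chapter~XV; the result goes back to Dunford and Foguel), and you should cite it rather than assert it. Second, the geometric separation you flag as the hard part is genuine but can be handled cleanly: since $f'\ne 0$ on the compact set $\sigma(T)$, a compactness argument (exactly the mechanism of Lemma~\ref{Lemma:dist} in the paper, here with $\Ombr=\emptyset$) yields a uniform $\delta>0$ such that distinct $f$-preimages in $\sigma(T)$ of any single point lie at mutual distance at least $\delta$; once the discs $D_k$ are taken with diameter below $\delta/2$, the set $\sigma(T)\cap f^{-1}(\overline{W_k})$ automatically breaks into finitely many pairwise $\tfrac{\delta}{2}$-separated compacta, each inside an injectivity disc, so the Riesz idempotents of $T^{(k)}$ are available. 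With these two items pinned down, your argument goes through.
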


The questions posed here are certainly interesting for unbounded operators. 
In particular, the case when $T$ is an (unbounded) generator of a $C_0$ group and one has to relate the resolvent growth of $T$ and that of the operators in the group 
$\{\exp(sT)\}$ for a single value of $s$ (or for all values $s\in\R$). We refer to 
Chapter 11 of the book \cite{Kantorovitz83} by S.~Kantorovitz for information on this problem in some special setting.

\section{Lemmas on analytic functions}
\label{sec:general}

Fix, as in the Introduction, a bounded linear operator $T$ on a Banach space $X$, 
a bounded open set $\Omega$ containing $\sigma(T)$, and an analytic function $f\colon\Omega\to\C$ 
that is not constant in any connected component of $\Omega$. 

Then $f'$ has only finitely many zeros on $\K$, which we denote by 
$\xi_1, \dots, \xi_\ell$. 
We will assume that $f$ is bounded on $\Om$ and that $f'$ has no zeros on $\Om\sm\K$; this is achieved by substituting $\Om$ with a smaller open set $\Om'$ such that $\K\ss\Om'\ss\Om$. 
The orders  $m_j$ of the branching points $\xi_j$ of the mapping $f$ satisfy $m_j\ge 2$. By basic complex analysis, 
there is a small $\eps>0$ and neighbourhoods $W_j$ of $\xi_j$ 
with disjoint closures contained in $\Om$ such that 
$f$ maps $m_j$-to-one $W_j\sm \{\xi_j\}$ to the punctured discs $0<|z-f(\xi_j)|<\eps$. 
We may also assume that the discs $D(f(\xi_j),\eps)$,  
$D(f(\xi_k),\eps)$ are disjoint whenever $f(\xi_j)\ne f(\xi_k)$ 
and that 
\begin{equation}\label{eq:2diam}
\cl D(\xi_k, 2\diam W_k) \subset \Om. 
\end{equation}

We put $\Ombr=\bigcup_j W_j$.  
Consider the compact set $(\K\sm\Ombr)\cup \pt\Ombr$. 
Each $\la$ in $(\K\sm\Ombr)\cup \pt\Ombr$ has a neighborhood $U$ in $\Om$ such that 
$\cl U\subset \Om$, $f$ is one-to-one in $U$, 
$f'\ne 0$ in $\cl U$, the image $V=f(U)$ is open, and the function $g\colon V\to U$ defined by $f(g(z))=z$ is analytic in $V$. 
In this situation, we say that $g$ is an \emph{analytic branch} of $f^{-1}$. 
We will use the notation $U\leftrightarrow V$ to express that there exists an analytic branch of $f^{-1}$ from $V$ to $U$. 

For each $\la\in (\K\sm\Ombr)\cup \pt\Ombr$ consider a neighborhood $U=U_\la$ as above. 
Without loss of generality, assume that $U_\la$ is an open disc.  
Let $U_{\la_1},\dotsc,U_{\la_d}$ be a finite cover of $(\K\sm\Ombr)\cup \pt\Ombr$.  
Let $\Omega_1$ and $\Omega_2$ be open sets such that 
\begin{equation}
	\label{eq:finite-cover}
	(\K\sm\Ombr)\cup \pt\Ombr 
	\ss\Omega_2\ss\cl\Omega_2     \ss\Omega_1\ss\cl\Omega_1\ss U_{\la_1}\cup\dotsb\cup U_{\la_d}. 
\end{equation} 

We will also set
\[
\Omonebr= \Om_1\cup\Ombr,  \quad  \Omtwobr= \Om_2\cup\Ombr. 
\]
Notice that $K\subset\Omtwobr\subset\Omonebr$. 

The following result simply says that $f$ is Lipschitz on the compact set $\cl{\Om}_{1br}$, which follows from  well-known facts. We give the proof for the sake of completeness.

\begin{Lemma}\label{lem:Lipschitz}
There is a constant $C$ such that  
\[
\abs{f(\la)-f(\wt\la)}\le C\abs{\la-\wt\la} 
\] 
for every $\la,\wt\la\in\Omonebr$. 
\end{Lemma}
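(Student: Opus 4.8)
The plan is to deduce the estimate from the elementary fact that an analytic function with bounded derivative on a neighbourhood of a compact set is Lipschitz there. The only subtlety is that $\Omonebr$ is in general neither convex nor connected, so one cannot simply integrate $f'$ along the segment joining two arbitrary points of $\Omonebr$; this will be handled by a case distinction according to the distance between the two points.

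First I would check that $\clOmonebr$ is a compact subset of $\Om$. Indeed, by \eqref{eq:finite-cover} we have $\cl\Omega_1\ss U_{\la_1}\cup\dotsb\cup U_{\la_d}\ss\Om$, while $\cl W_j\ss\Om$ for each $j$; since $\Ombr=\bigcup_j W_j$ is a finite union, $\clOmonebr=\cl\Omega_1\cup\bigcup_j\cl W_j$ is compact and contained in $\Om$. As $\Om$ is bounded and $\clOmonebr$ is a compact subset of the open set $\Om$, the number $\rho:=\dist(\clOmonebr,\C\sm\Om)$ is a strictly positive real number, and the set $S:=\{z\in\C:\dist(z,\clOmonebr)\le\rho/2\}$ is a compact subset of $\Om$. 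Hence
\[
M:=\sup_{z\in S}\abs{f'(z)}<\infty
\qquad\text{and}\qquad
M_0:=\sup_{z\in\clOmonebr}\abs{f(z)}<\infty ,
\]
the first because $f'$ is continuous on $\Om\supset S$, the second because $f$ was assumed bounded on $\Om$ (equivalently, $f$ is continuous on the compact set $\clOmonebr$).

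Next I would fix $\la,\wt\la\in\Omonebr$ and argue by cases. If $\abs{\la-\wt\la}\le\rho/2$, then every point of the segment joining $\la$ and $\wt\la$ lies within distance $\rho/2$ of $\la\in\clOmonebr$, hence in $S\ss\Om$; integrating $f'$ along this segment gives $\abs{f(\la)-f(\wt\la)}\le M\abs{\la-\wt\la}$. If instead $\abs{\la-\wt\la}>\rho/2$, then $\abs{f(\la)-f(\wt\la)}\le 2M_0<(4M_0/\rho)\abs{\la-\wt\la}$. Taking $C:=\max\{M,4M_0/\rho\}$ yields the claimed inequality for all $\la,\wt\la\in\Omonebr$, and it extends to $\clOmonebr$ by continuity.

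I do not anticipate any genuine obstacle here; the argument is routine. The only point one must not overlook is that the lack of convexity (and connectedness) of $\Omonebr$ forces the case distinction above rather than a single application of the fundamental theorem of calculus along a straight segment.
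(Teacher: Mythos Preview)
Your proof is correct and follows essentially the same approach as the paper: split into cases according to whether $\abs{\la-\wt\la}$ is small or large, using boundedness of $f'$ on a compact neighbourhood of $\clOmonebr$ in the former case and boundedness of $f$ itself in the latter. The only cosmetic difference is that the paper obtains the convex set containing the segment via a finite cover of $\clOmonebr$ by open discs together with a Lebesgue number, whereas you use the $\rho/2$-tube around $\clOmonebr$; both devices serve the same purpose.
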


\begin{proof}
Let $D_1,\dotsc,D_n$ be open discs such that 
\[
\cl\Om_{1br}\ss D_1\cup\dotsb\cup D_n\ss\cl{(D_1\cup\dotsb\cup D_n)}\ss\Omega, 
\]
and let $\delta>0$ be a Lebesgue number associated to this open covering, 
that is, if $A\subset \Omonebr$ has diameter $\diam A<\de$, then $A\subset D_j$ for some $j$. Take $\la,\wt\la\in\Omonebr$. If $\abs{\la-\wt\la}\ge\delta$, then 
\[
\abs{f(\la)-f(\wt\la)}
\le
\diam f(\Omonebr)
\le
\frac{\diam f(\Omonebr)}{\delta}\abs{\la-\wt\la}
=:C_1\abs{\la-\wt\la}. 
\]
On the other hand, if $\abs{\la-\wt\la}<\delta$, then both $\la$ and $\wt\la$ belong 
to some disc $D_j$. By the convexity of discs, an application of the mean value theorem 
gives that 
$
\abs{f(\la)-f(\wt\la)}\le C_2\abs{\la-\wt\la}, 
$
where 
\[
C_2:=\max\{\abs{f'(s)}:s\in\cl{(D_1\cup\dotsb\cup D_n)}\}. 
\]
Therefore the statement follows by taking $C:=\max\{C_1,C_2\}$. 
\end{proof}

\begin{Lemma}\label{Lemma:dist}
	There is a positive number $\de$ such that 
	any two points $\la \ne \la'$ in $\Omonebr$ that satisfy  $f(\la)=f(\la')$ and 
	$|\la-\la'|<\de$ belong to the same set $W_j$, for some $j$. 
\end{Lemma}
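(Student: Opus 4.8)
The plan is to argue by compactness and contradiction. Suppose the conclusion fails; then for every $n\in\N$ there exist points $\la_n\ne\la_n'$ in $\Omonebr$ with $f(\la_n)=f(\la_n')$, $|\la_n-\la_n'|<1/n$, and such that $\la_n,\la_n'$ do not both lie in a common $W_j$. Since $\cl\Omonebr$ is compact (it is a bounded closed subset of $\Om$), after passing to a subsequence we may assume $\la_n\to\la_*$ and, because $|\la_n-\la_n'|\to0$, also $\la_n'\to\la_*$ for the same limit point $\la_*\in\cl\Omonebr$. By continuity of $f$ we get $f(\la_*)=f(\la_*)$, which is vacuous, but the useful information is that $\la_*$ is a point near which $f$ fails to be locally injective: arbitrarily close to $\la_*$ there are pairs of distinct points with equal $f$-value. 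Hence $f'(\la_*)=0$, for if $f'(\la_*)\ne0$ then $f$ would be one-to-one on a neighbourhood of $\la_*$ (by the inverse function theorem / the analytic-branch construction recalled before Lemma~\ref{lem:Lipschitz}), contradicting the existence of the pairs $(\la_n,\la_n')$ for large $n$.

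Next I would locate $\la_*$ among the branching points. Recall that $f'$ has no zeros on $\Om\sm\K$ and its only zeros on $\K$ are $\xi_1,\dots,\xi_\ell$; moreover $\cl\Omonebr\subset\Om$. Therefore $\la_*=\xi_j$ for some $j$. I then invoke the basic complex-analytic normal form near a branching point: on the neighbourhood $W_j$ (shrunk if necessary, but $W_j$ was already chosen with this property in the construction before Lemma~\ref{lem:Lipschitz}), $f$ is exactly $m_j$-to-one from $W_j\sm\{\xi_j\}$ onto the punctured disc $0<|z-f(\xi_j)|<\eps$, and in particular there is a neighbourhood $W_j'\subset W_j$ of $\xi_j$ such that any two distinct points of $\Om$ with the same $f$-value that both lie in $W_j'$ must... — more precisely, what I actually need is the separation statement: there is $r_j>0$ such that if $w,w'\in\Om$, $w\ne w'$, $f(w)=f(w')$, and $w\in D(\xi_j,r_j)$, then either $w'\in W_j$ as well, or $|w-w'|\ge r_j$. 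This follows because the only $f$-preimages of $f(D(\xi_j,r_j)\sm\{\xi_j\})$ lying near $\xi_j$ are inside $W_j$, while the (finitely many) other sheets of $f^{-1}$ over that small punctured disc are bounded away from $\xi_j$; one uses here that $f$ is a finite-to-one proper analytic map on a neighbourhood of $\cl\Omonebr$, so the preimage of a small disc around $f(\xi_j)$ splits into finitely many clopen pieces at positive distance from one another.

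Now the contradiction is immediate: for $n$ large, $\la_n$ lies in $D(\xi_j,r_j)$ (since $\la_n\to\xi_j$) and $|\la_n-\la_n'|<1/n<r_j$, so by the separation statement $\la_n'\in W_j$; but also $\la_n\in D(\xi_j,r_j)\subset W_j$, so both $\la_n$ and $\la_n'$ lie in $W_j$, contradicting the choice of the sequence. Finally, to turn this into the quantitative statement of the lemma, I would take $\de$ to be the minimum of the finitely many radii $r_j$ together with a lower bound for $\dist\big(\xi_j, \cl\Omonebr\sm W_j\big)$ over $j$, plus a global constant ensuring that outside $\bigcup_j D(\xi_j,r_j)$ the map $f$ is locally injective with a uniform injectivity radius (which exists, again by compactness of $\cl\Omonebr\sm\bigcup_j W_j'$ and the fact that $f'\ne0$ there); then any pair $\la\ne\la'$ in $\Omonebr$ with $f(\la)=f(\la')$ and $|\la-\la'|<\de$ is forced into a common $W_j$.

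The main obstacle I anticipate is the second paragraph: making the local normal form near $\xi_j$ do exactly the work required, namely separating the "local" sheet (inside $W_j$) from all the other branches of $f^{-1}$ over the small disc $D(f(\xi_j),\eps)$. This is where one must be careful that the construction of the $W_j$ and of $\eps$ in the paragraph preceding Lemma~\ref{lem:Lipschitz} already guarantees the needed disjointness — in particular that $f^{-1}\big(D(f(\xi_j),\eps)\big)\cap \cl\Omonebr$ is a disjoint union of $W_j$ (the sheet through $\xi_j$) and finitely many other components, each a positive distance from $\xi_j$ — so that no extra shrinking beyond what was set up there is logically needed, only its careful invocation.
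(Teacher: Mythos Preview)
Your argument is correct, but it is considerably more elaborate than necessary and takes a different route from the paper. The paper's proof is a two-line Lebesgue-number argument: the sets $U_{\la_1},\dots,U_{\la_d},W_1,\dots,W_\ell$ form an open cover of the compact set $\cl\Omonebr$; take $\de$ to be a Lebesgue number for this cover. Then any pair $\la,\la'\in\Omonebr$ with $|\la-\la'|<\de$ lies in a single covering set, and since $f$ is injective on each $U_{\la_k}$, two distinct points with the same $f$-value must land in some $W_j$. This exploits directly the way the $U_{\la_k}$ were constructed (as patches on which $f$ is one-to-one), and needs no analysis of branching behaviour at all.

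Your compactness--contradiction approach also works, but you can shorten it drastically. Once you have shown that the common limit $\la_*$ satisfies $f'(\la_*)=0$ and hence $\la_*=\xi_j$, you are finished: $W_j$ is an \emph{open} neighbourhood of $\xi_j$, and since both $\la_n$ and $\la_n'$ converge to $\xi_j$, for large $n$ they both lie in $W_j$---contradiction. The ``separation statement'' in your second paragraph (splitting $f^{-1}$ into sheets at positive distance) is superfluous, as is the entire final paragraph about constructing an explicit $\de$: the contradiction argument already \emph{is} the proof of existence of $\de$, since the negation of the lemma is precisely what furnishes the offending sequence. What your approach buys is independence from the particular cover $\{U_{\la_k}\}$ set up in~\eqref{eq:finite-cover}; what the paper's approach buys is a proof that fits in three sentences and avoids any local analysis near the branching points.
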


\begin{proof} 
Let $\de$ be a Lebesgue number associated to the open covering 
\[
\Omonebr\ss U_{\la_1}\cup\dotsb\cup U_{\la_d}\cup W_1\cup\dotsb W_\ell. 
\]
Then, whenever $\la, \la'$ satisfy the above conditions, these points belong to the same set of the open covering. This set cannot be a set $U_{\la_j}$, because $f$ is injective on each of these sets. So both $\la$ and $\la'$ belong to a set $W_j$ for some $j$. 
\end{proof}

Notice that $f'\ne0$ on $\cl\Om_1$. 
We set
\begin{equation}\label{eq:tau}
	\tau:=\min\{\abs{f'(\lambda)}:\lambda\in\cl\Omega_1\}>0. 
\end{equation}

\begin{Lemma}\label{Lemma:rho}
	There is a positive number $\rho$ such that every $\la$ in $\cl\Omega_2$ has a neighborhood $Y_\la$ in $\Omega_1$ satisfying $Y_\la\leftrightarrow D(f(\la),\rho)$. 
\end{Lemma}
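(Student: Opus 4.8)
The plan is to prove a uniform quantitative inverse function theorem on $\cl\Omega_2$, using three ingredients: $\cl\Omega_2$ is a compact subset of the open set $\Omega_1$; $f$ is analytic on the neighbourhood $\Omega$ of $\cl\Omega_1$; and, by \eqref{eq:tau}, $|f'|\ge\tau>0$ on $\cl\Omega_1$. First I would fix the relevant scales. Put $d_0:=\dist(\cl\Omega_2,\C\sm\Omega_1)>0$, and, using the uniform continuity of $f'$ on the compact set $\cl\Omega_1$, choose $r_1>0$ so that $|f'(\zeta)-f'(\zeta')|<\tau/2$ whenever $\zeta,\zeta'\in\cl\Omega_1$ and $|\zeta-\zeta'|\le r_1$. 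Then set $r_0:=\tfrac12\min\{d_0,r_1\}$ and $\rho:=\tfrac12\tau r_0$; these depend only on $f,\Omega_1,\Omega_2$, not on the chosen point.

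Next, fix $\la\in\cl\Omega_2$. Since $r_0<d_0$, the closed disc $\ol{D(\la,r_0)}$ lies in $D(\la,d_0)\subset\Omega_1\subset\Omega$, and on it $|f'(\zeta)-f'(\la)|<\tau/2\le\tfrac12|f'(\la)|$. A standard fixed-point argument then shows that $f$ is one-to-one on $D(\la,r_0)$ and that $f\big(D(\la,r_0)\big)\supset D(f(\la),\rho)$: given $w\in D(f(\la),\rho)$, the map $\phi_w(\zeta):=\zeta-f'(\la)^{-1}\big(f(\zeta)-w\big)$ satisfies $|\phi_w'|<\tfrac12$ on $\ol{D(\la,r_0)}$ and $|\phi_w(\la)-\la|=|f(\la)-w|/|f'(\la)|<r_0/2$, so it maps $\ol{D(\la,r_0)}$ into $D(\la,r_0)$ and is a contraction there; its unique fixed point solves $f(\zeta)=w$, and uniqueness of the fixed point also yields injectivity of $f$. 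I would then take $Y_\la:=\big(f|_{D(\la,r_0)}\big)^{-1}\big(D(f(\la),\rho)\big)$: this is an open neighbourhood of $\la$ contained in $\Omega_1$, with $\ol{Y_\la}\subset\ol{D(\la,r_0)}\subset\Omega$, on which $f$ is one-to-one with $f'\ne0$ and $f(Y_\la)=D(f(\la),\rho)$. Hence the inverse of $f|_{Y_\la}$ is an analytic branch of $f^{-1}$ from $D(f(\la),\rho)$ to $Y_\la$, i.e.\ $Y_\la\leftrightarrow D(f(\la),\rho)$.

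There is no real difficulty here; the only point that needs care is the \emph{uniformity} of $\rho$ over $\la\in\cl\Omega_2$, and this is exactly where the three compactness inputs are used — the positive distance $d_0$ from $\cl\Omega_2$ to $\prt\Omega_1$, the uniform continuity of $f'$, and the lower bound $\tau>0$. As an alternative to the contraction step one could instead invoke the Koebe $\tfrac14$-theorem for the univalent function $f|_{D(\la,r_0)}$, which gives $f\big(D(\la,r_0)\big)\supset D\big(f(\la),\tfrac14|f'(\la)|r_0\big)$ and hence the same conclusion with $\rho=\tfrac14\tau r_0$; I would keep the elementary fixed-point version, as it is self-contained.
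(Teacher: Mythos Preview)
Your argument is correct. The paper's proof takes a slightly different route: instead of proving a quantitative inverse function theorem from scratch, it exploits the finite cover $U_{\lambda_1},\dots,U_{\lambda_d}$ of $\cl\Omega_1$ (on each of which $f$ is already known to be univalent), takes its Lebesgue number $\delta_1$, chooses $\delta_2<\delta_1/2$ with $D(\lambda,\delta_2)\subset\Omega_1$ for all $\lambda\in\cl\Omega_2$, and then applies Koebe's $\tfrac14$ theorem to the univalent map $f|_{D(\lambda,\delta_2)}$ to obtain $\rho=\tau\delta_2/4$ and $Y_\lambda=f^{-1}(D(f(\lambda),\rho))\cap D(\lambda,\delta_2)$. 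Your approach is more self-contained: it avoids both Koebe's theorem and the pre-constructed cover, at the cost of the (standard) Newton-contraction computation; you even mention Koebe as an alternative for the covering-radius step, which is exactly what the paper uses. The only extra idea in the paper is to borrow univalence from the $U_{\lambda_j}$'s via a Lebesgue number rather than re-derive it from $|f'-f'(\lambda)|<\tau/2$. Either way, the substance is the same uniform inverse function theorem on $\cl\Omega_2$.
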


\begin{proof}
	Let $\delta_1>0$ be the Lebesgue number associated to the cover $\{U_{\la_j}\}$ of $\clOmone$. Take $\delta_2\in(0,\delta_1/2)$ sufficiently small so that the discs $D(\lambda,\delta_2)$ are contained in $\Omega_1$ for all $\lambda\in\cl\Omega_2$. 
	We claim that $\rho:=\tau\delta_2/4$ satisfies the statement. 
	Indeed, fix $\lambda\in\cl\Omega_2$. 
	Since $\diam D(\lambda,\delta_2)<\delta_1$, we have 
	$D(\lambda,\delta_2)\ss U_{\lambda_j}$ for some $j\in\{1,\dotsc,d\}$. By the Koebe $1/4$ theorem, $D(f(\lambda),\rho)\ss f(D(\lambda,\delta_2))$.  
	Therefore the set 
	\[
	Y_\lambda:=f^{-1}(D(f(\lambda),\rho))\cap D(\lambda,\delta_2)
	\] 
	fulfills the desired properties. 
\end{proof}

Given a set $S$ and functions $\alpha,\beta\colon S\to[0,\infty)$, we write 
\[
\alpha(s)\lesssim\beta(s), \quad s\in S
\]
if there exists a constant $C>0$, which does not depend on $s$, such that 
\[
\alpha(s)\le C\beta(s) \quad \tn{for all } s\in S. 
\]
If we have both $\alpha(s)\lesssim\beta(s)$ and $\beta(s)\lesssim\alpha(s)$, 
then we write 
\[
\alpha(s)\asymp\beta(s), \quad s\in S. 
\]
Put 
\[
\wtK:=f^{-1}\big(f(\K)\big)\cap\clOmonebr. 
\]
Clearly, $\wtK$ is compact, $\wtK\supset \K$ and $f(\wtK)=f(K)$. 

\begin{Lemma}\label{lem:bilipsch}
We have 
\[
\dist(\la, \wtK) \asymp \dist(f(\la),f(K)), 
\quad 
\la\in \cl{\Om}_2. 
\] 
\end{Lemma}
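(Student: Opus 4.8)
The plan is to prove the two inequalities separately. For the upper bound $\dist(f(\la),f(K))\lesssim\dist(\la,\wtK)$ the idea is purely Lipschitz: since $\wtK\subset\clOmonebr$ and $f$ is Lipschitz on $\Omonebr$ with some constant $C$ (Lemma~\ref{lem:Lipschitz}), for $\la\in\cl\Om_2$ pick a near-optimal $\wt\la\in\wtK$ with $|\la-\wt\la|$ close to $\dist(\la,\wtK)$; one should first note that $\dist(\la,\wtK)$ is small (bounded by $\diam\Omonebr$), so the segment, or at least the point $\wt\la$, stays in $\Omonebr$ and the Lipschitz estimate applies. Then $\dist(f(\la),f(K))\le|f(\la)-f(\wt\la)|\le C|\la-\wt\la|\approx C\dist(\la,\wtK)$, using $f(\wt\la)\in f(\wtK)=f(K)$.

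The lower bound $\dist(\la,\wtK)\lesssim\dist(f(\la),f(K))$ is the substantive direction and will use Lemma~\ref{Lemma:rho} together with the lower bound $\tau$ on $|f'|$ from \eqref{eq:tau}. Fix $\la\in\cl\Om_2$ and let $w_0\in f(K)$ be a point realizing $\dist(f(\la),f(K))=:\eta$. If $\eta\ge\rho/2$ (with $\rho$ from Lemma~\ref{Lemma:rho}) there is nothing to prove, since $\dist(\la,\wtK)$ is bounded by a constant; so assume $\eta<\rho/2$. By Lemma~\ref{Lemma:rho}, $\la$ has a neighborhood $Y_\la\subset\Om_1$ with an analytic branch $g\colon D(f(\la),\rho)\to Y_\la$ of $f^{-1}$, and $g(f(\la))=\la$. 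Since $|g'|=1/|f'\circ g|\le 1/\tau$ on $D(f(\la),\rho)$, the function $g$ is $(1/\tau)$-Lipschitz there. Now $w_0\in D(f(\la),\rho)$ because $|w_0-f(\la)|=\eta<\rho$, so $\mu:=g(w_0)$ is defined, lies in $\Om_1\subset\Omonebr$, and satisfies $f(\mu)=w_0\in f(K)$; hence $\mu\in f^{-1}(f(K))\cap\clOmonebr=\wtK$. Therefore
\[
\dist(\la,\wtK)\le|\la-\mu|=|g(f(\la))-g(w_0)|\le\frac1\tau|f(\la)-w_0|=\frac1\tau\,\dist(f(\la),f(K)),
\]
which is the desired bound.

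The main obstacle is making sure, in the lower-bound argument, that the point $\mu=g(w_0)$ we produce actually lands in $\wtK$ and not merely in $f^{-1}(f(K))$ somewhere outside $\clOmonebr$; this is handled by the dichotomy on the size of $\eta$ (small $\eta$ forces $w_0$ into the disc $D(f(\la),\rho)$ on which the branch $g$ is defined and takes values in $Y_\la\subset\Om_1$), so $\mu\in\clOmonebr$ automatically. A secondary routine point is the large-distance case, where both sides are bounded above and below by positive constants depending only on $\cl\Om_2$, $f(K)$ and the fixed data, so the $\asymp$ is trivially satisfied there. Apart from these, the estimates are immediate from Lemmas~\ref{lem:Lipschitz} and~\ref{Lemma:rho} and the definition of $\tau$.
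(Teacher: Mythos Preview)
Your proof is correct and follows essentially the same route as the paper's: the upper bound via the Lipschitz estimate of Lemma~\ref{lem:Lipschitz} applied to a nearest point of $\wtK$, and the lower bound via the dichotomy on $\dist(f(\la),f(K))$ versus $\rho$, using in the small case the local branch $g$ from Lemma~\ref{Lemma:rho} together with $|g'|\le 1/\tau$ to pull a nearest point of $f(K)$ back to a point of $\wtK$ within distance $\tau^{-1}\dist(f(\la),f(K))$ of $\la$. The only (harmless) discrepancies are your threshold $\rho/2$ instead of $\rho$ and the slightly loose phrase in the final paragraph that in the large-distance case ``both sides are bounded below by positive constants'' --- in fact only $\dist(f(\la),f(K))$ need be bounded below, while $\dist(\la,\wtK)$ is merely bounded above; your earlier sentence handling this case (``$\dist(\la,\wtK)$ is bounded by a constant'') is the correct formulation.
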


\begin{proof}
Let us first prove that 
\begin{equation}\label{eq:bilipsch1}
\dist(f(\la),f(K))\lesssim\dist(\la,\wtK),\quad\la\in \cl{\Om}_2. 
\end{equation}
For each $\lambda\in\cl{\Omega}_2$, let $\wt\lambda\in\wtK$ be such that 
$\dist(\lambda,\wtK)=\abs{\lambda-\wt\lambda}$. 
By Lemma~\ref{lem:Lipschitz},  
\[
\abs{f(\lambda)-f(\wt\lambda)}\lesssim\abs{\lambda-\wt\lambda}, 
\quad \lambda\in\cl\Omega_2. 
\]
Since the left hand side above is at least 
$\dist(f(\lambda),\fK)$, \eqref{eq:bilipsch1} follows. 

Now let us see that 
\begin{equation}\label{eq:bilipsch2}
\dist(\la,\wtK)\lesssim\dist(f(\la),\fK),\quad\la\in \cl{\Om}_2.
\end{equation}
Fix $\lambda\in\cl\Omega_2$ and let $\rho$ be as in Lemma~\ref{Lemma:rho}. 

If $\dist(f(\la),\fK)\ge\rho$ then
\[
\dist(\lambda,\wtK)\le\diam\Omtwobr\le(\rho^{-1}\diam\Omtwobr)\dist(f(\lambda),\fK). 
\]
Now suppose that $\dist(f(\la),\fK)<\rho$. 
Take a point $w\in \fK\cap D(f(\lambda),\rho)$ such that 
$\dist(f(\lambda),\fK)=\abs{f(\lambda)-w}$. 
Let $g\colon D(f(\lambda),\rho)\to Y_\lambda$ be an analytic branch of $f^{-1}$. 
Since $Y_\la\subset\Om_1$, 
we have $g(w)\in\wtK$. Observe that we also have $\abs{g'}\le\tau^{-1}$ in $D(f(\lambda),\rho)$, where $\tau$ is given by~\eqref{eq:tau}. Therefore 
\begin{align*}
\dist(\lambda,\wtK)
\le
\abs{g(f(\lambda))-g(w)}
\le
\tau^{-1}\abs{f(\lambda)-w}
=
\tau^{-1}\dist(f(\lambda),\fK). 
\end{align*}
This completes the proof. 
\end{proof}

For any $z\in\C$, we set $k(z)$ to be the number the number of preimages under $f$ of $z$ in $\Omonebr$, counted with their multiplicities. Notice that there is a uniform bound
\[
k(z)\le n_0, 
\] 
because the number of preimages of $z$ on each of the sets $U_{\la_j}, W_k$ is uniformly bounded. Let us denote the preimages of  $z$ in $\Omonebr$ by $\la_{z,1},\dotsc,\la_{z,k(z)}$. 

For $z\in f(\Omtwobr)$, define $\varphi_z\colon\Omonebr\to\C$ by 
\begin{equation}\label{eq:DefVarphi}
f(\la)-z=\varphi_z(\la)
(\la-\la_{z,1})\dotsb(\la-\la_{z,k(z)}). 
\end{equation}
This function is analytic in $\Omonebr$ and does not vanish there, 
hence $1/\varphi_z$ is also analytic in $\Omonebr$. 
In the next lemma, we are interested in its restriction to the domain $\Omtwobr$, which is smaller than 
$\Omonebr$.

\begin{Lemma}\label{lem:HinftyNorm}
We have 
\[
\norm{1/\varphi_z}_{H^\infty(\Omtwobr)}\lesssim 1, 
\quad 
z\in f(\Omtwobr).
\]
\end{Lemma}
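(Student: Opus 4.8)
The plan is to bound $1/\varphi_z$ by controlling $\varphi_z$ from below, treating separately the points $\la$ that are far from every preimage $\la_{z,j}$ and those that are close to one of them. For $\la\in\Omtwobr$ far from all preimages, $|f(\la)-z|$ is bounded below away from zero (since $z\mapsto\dist(f(\la),z)$ controls things, and the set of $\la$ with $|\la-\la_{z,j}|\ge\text{const}$ for all $j$ is a compact piece on which $f(\la)-z$ cannot vanish), while the product $\prod(\la-\la_{z,j})$ is bounded above by $(\diam\Omonebr)^{k(z)}\le(\diam\Omonebr)^{n_0}$; so $|\varphi_z(\la)|$ is bounded below there. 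The interesting region is where $\la$ is close to some preimage, and there the key point is that $\varphi_z(\la)$ is, up to a nonvanishing factor, the ratio $(f(\la)-z)/(\la-\la_{z,j_0})$ with the remaining linear factors separated off.

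**First** I would fix $z\in f(\Omtwobr)$ and $\la\in\Omtwobr$, and pick a preimage $\la_{z,j_0}$ closest to $\la$. The factors $(\la-\la_{z,j})$ with $j\ne j_0$ are all at most $\diam\Omonebr$ in modulus, so it suffices to bound from below the single quantity
\[
\Psi_z(\la):=\frac{f(\la)-z}{\la-\la_{z,j_0}}.
\]
Now I would invoke Lemma~\ref{Lemma:dist}: there is a $\de>0$ so that distinct preimages of $z$ lying within $\de$ of each other must be in a common $W_j$. So either (a) $\la_{z,j_0}\in W_j$ for some $j$ and all preimages within $\de$ of $\la_{z,j_0}$ are also in that $W_j$, where $f$ is the standard $m_j$-to-one branched cover of $D(f(\xi_j),\eps)$ and one has a clean local model $f(\la)-z\asymp\big((\la-\xi_j)^{m_j}-(\la_{z,j_0}-\xi_j)^{m_j}\big)$ times a unit, from which $\Psi_z(\la)$ factors as a unit times $\prod_{i}(\la-\la_{z,i})$ over the preimages in $W_j$ other than $j_0$ — wait, that's circular, so instead I use: on $W_j$, $f'(\la)=(\la-\xi_j)^{m_j-1}h(\la)$ with $h$ a unit, and a direct estimate gives $|f(\la)-z|\gtrsim|\la-\la_{z,j_0}|\cdot\max_i|\la-\la_{z,i}|^{m_j-1}\gtrsim|\la-\la_{z,j_0}|$ when $|\la-\la_{z,j_0}|\le\de$ and $\la$ is the closest of the $m_j$ preimages; or (b) $\la_{z,j_0}$ is a simple preimage isolated by $\de$ from all other preimages, where $f'(\la_{z,j_0})\ne0$ and $|f(\la)-z|\asymp|\la-\la_{z,j_0}|$ for $|\la-\la_{z,j_0}|$ small by the inverse function theorem with derivative bounds $|f'|\le C$ (Lemma~\ref{lem:Lipschitz}) and $|f'|\ge\tau$ (equation~\eqref{eq:tau}, valid on $\cl\Om_1$; near branch points one uses the $W_j$ structure instead).

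**The main obstacle** is organizing the case near a branch point $\xi_j$: there $f'$ does vanish, so one cannot simply use $|f'|\ge\tau$, and one must use the local $m_j$-to-one structure to show that even though the individual linear factors $(\la-\la_{z,i})$ may be tiny, the quotient $\Psi_z(\la)=(f(\la)-z)/(\la-\la_{z,j_0})$ stays bounded below, because $\la_{z,j_0}$ was chosen as the \emph{closest} preimage, so $|\la-\la_{z,j_0}|\le|\la-\la_{z,i}|$ for all $i$ in that $W_j$, and $f(\la)-z=\text{unit}\cdot\prod_i(\la-\la_{z,i})$ forces $|f(\la)-z|\ge c\,|\la-\la_{z,j_0}|\cdot\prod_{i\ne j_0}|\la-\la_{z,i}|$; combining with $|\la-\la_{z,i}|$ bounded above then gives no lower bound of the right form unless one is more careful — the clean statement is that $|f(\la)-z|/|\la-\la_{z,j_0}|=|\text{unit}|\cdot\prod_{i\ne j_0}|\la-\la_{z,i}|$ and this can indeed be small, so the correct normalization is that $1/\varphi_z=1/(\text{unit}\cdot\prod_{\text{all other }j})$ where "all other" refers to preimages outside $\Omonebr$'s relevant chart — but actually $\varphi_z$ by definition already divides out \emph{all} $k(z)$ preimages in $\Omonebr$, so $\varphi_z$ itself is exactly the unit $\text{unit}\cdot(\text{nonvanishing factor from the }U_{\la_j}\text{ charts})$, and the whole point is that $1/\varphi_z$ is a product of finitely many analytic nonvanishing functions on $\cl\Omtwobr$ whose bounds are uniform in $z$ because everything is controlled by the fixed finite data $\{W_j\},\{U_{\la_j}\},\tau,\eps,\de,n_0$. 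So the cleanest route: show $\varphi_z$ extends analytically and nonvanishingly to a fixed neighborhood of $\cl\Omtwobr$ (namely $\Omonebr$, as already observed), then prove a uniform-in-$z$ lower bound for $|\varphi_z|$ on $\cl\Omtwobr$ by a normal-families / compactness argument: if $|\varphi_{z_n}(\la_n)|\to0$, extract a limit $z_n\to z_*$, $\la_n\to\la_*$, and the limiting function $\varphi_{z_*}$ would vanish at $\la_*$, contradicting that $1/\varphi_{z_*}$ is analytic on $\Omonebr\supset\cl\Omtwobr$ — the only subtlety being continuity of $z\mapsto\varphi_z$ and of the preimage configuration, which one handles by noting preimages vary continuously (with multiplicity) and the definition~\eqref{eq:DefVarphi} is continuous in that data. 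I would present the compactness argument as the primary proof and relegate the explicit branch-point estimates to a remark, since the compactness argument sidesteps exactly the bookkeeping that is the main obstacle.
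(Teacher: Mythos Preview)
Your exploration eventually settles on a compactness/normal-families argument as the primary proof. That is a legitimate alternative to what the paper does, but it is less direct, and you miss the key simplification that makes the paper's proof short. The paper never wrestles with branch points at all: it first bounds $|1/\varphi_z(\la)|$ only for $\la\in\Omega_2$ (where $|f'|\ge\tau>0$, so your ``main obstacle'' simply does not arise), and then uses the maximum principle---noting that $\partial\Ombr\subset\Omega_2$ and that $1/\varphi_z$ is analytic on a neighbourhood of $\cl\Ombr$---to conclude that $\|1/\varphi_z\|_{H^\infty(\Omega_2)}=\|1/\varphi_z\|_{H^\infty(\Omtwobr)}$. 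On $\Omega_2$ the estimate is elementary and close in spirit to what you were trying with $\Psi_z$: if $|f(\la)-z|\ge\rho$ (the $\rho$ of Lemma~\ref{Lemma:rho}), then trivially $|1/\varphi_z(\la)|\le(1+(\diam\Omega_1)^{n_0})/\rho$; if $|f(\la)-z|<\rho$, Lemma~\ref{Lemma:rho} gives an analytic branch $g$ of $f^{-1}$ on $D(f(\la),\rho)$ with image in $\Omega_1$, so some preimage $\la_{z,j}=g(z)$ satisfies $|\la-\la_{z,j}|\le\tau^{-1}|f(\la)-z|$, and hence $|1/\varphi_z(\la)|\le\tau^{-1}(1+(\diam\Omega_1)^{n_0-1})$. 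The local $W_j$ models are never touched.

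Regarding your compactness argument: the subtlety you flag is real and not handled by the remark ``preimages vary continuously (with multiplicity)''. The map $z\mapsto\varphi_z$ is \emph{not} continuous, because the number $k(z)$ of preimages lying in the \emph{open} set $\Omonebr$ can jump when a preimage crosses $\partial\Omonebr$; along your sequence $z_n\to z_*$, the functions $\varphi_{z_n}$ and $\varphi_{z_*}$ may then differ by stray factors $(\la-\mu)^{\pm1}$ with $\mu\in\partial\Omonebr$. The argument is salvageable: since $\cl\Omtwobr\subset\Omonebr$, one has $\dist(\cl\Omtwobr,\partial\Omonebr)>0$, so these stray factors are bounded above and below on $\cl\Omtwobr$ uniformly, and $\varphi_{z_n}(\la_n)\to0$ still forces $\varphi_{z_*}(\la_*)=0$, a contradiction. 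But this step needs to be made explicit; as written, the proposal leaves a genuine gap here.
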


\begin{proof}
Fix $z\in f(\Omtwobr)$.  
Let $\rho$ be the 
number from Lemma~\ref{Lemma:rho}. 

Let $\la$ be any point in $\Omega_2$ be such that $f(\la)\neq z$.
If $\abs{f(\la)-z}\ge\rho$, then we have the bound 
\[
\Abs{\frac{1}{\varphi_z(\la)}}
=
\Abs{\frac{(\la-\la_{z,1})\dotsb
		(\la-\la_{z,k(z)})}{f(\la)-z}}
\le 
\frac{1+(\diam\Omega_1)^{n_0}}{\rho}\, . 
\]
If $\abs{f(\la)-z}<\rho$, then there is a neighborhood $Y_\la\ss\Om_1$ of $\la$ such that $Y_\la\leftrightarrow D(f(\la),\rho)$. 
In particular, for some $j$, the  
point $\la_{z,j}$ belongs to $Y_\la$. 
Let $g$ be the analytic branch of $f^{-1}$ from $D(f(\la),\rho)$ to $Y_\la$. 
We have 
\begin{align*}
\abs{\la-\la_{z,j}}
&=
\abs{g(f(\la))-g(z)}\\
&\le 
\Big[\sup_{t\in D(f(\la),\rho)}\abs{g'(t)}\Big]\abs{f(\la)-z}\\
&\le 
\tau^{-1}\abs{f(\la)-z}. 
\end{align*}
Therefore, by~\eqref{eq:DefVarphi}, in this case we obtain the bound 
\[
\Abs{\frac{1}{\varphi_z(\la)}}
\le 
\tau^{-1}\big(1+(\diam\Omega_1)^{n_0-1}\big).
\]
The above estimates show that the norms $\|1/\phi_z\|_{H^\infty(\Om_2)}$ are uniformly bounded for $z\in f(\Om_2)$. 
Since $\pt\Ombr\subset\Om_2$ and $1/\phi_z$ is analytic on a neighbourhood of $\cl{\Om}_{br}$, the maximum principle gives that $\|1/\phi_z\|_{H^\infty(\Om_2)}=\|1/\phi_z\|_{H^\infty(\Omtwobr)}$. This completes the proof. 
\end{proof}


\section{Proofs of Theorem A and Theorem \ref{converse_thm}}

\begin{proof}[Proof of Bakaev's Theorem A]
It will suffice to estimate $\|(f(T)-z)^{-1}\|$ for $z\in f(\Omtwobr)\sm f(K)$. 
Since any such $z$ does not belong to the set $\{f(\xi_1),\dotsc,f(\xi_\ell)\}$, 
its preimages (under $f$) in $\Omonebr$ are all different. As before, 
we denote them by $\la_{z,1}, \dots, \la_{z,k(z)}$. Consider the decomposition into simple fractions 
\[
\frac{1}{(\la-\la_{z,1})\dotsb(\la-\la_{z,k(z)})}
=
\frac{c_{z,1}}{\la-\la_{z,1}}+\dotsb+
\frac{c_{z,k(z)}}{\la-\la_{z,k(z)}}. 
\]
By~\eqref{eq:DefVarphi}, 
\[
(f(\la)-z)^{-1}=
\varphi_z(\la)^{-1}\sum_{j=1}^{k(z)} c_{z,j}(\la-\la_{z,j})^{-1},  
\]
so applying the Riesz--Dunford calculus one gets  
\begin{equation}
\label{eq:fTz}
(f(T)-z)^{-1}=
\varphi_z(T)^{-1}\sum_{j=1}^{k(z)} c_{z,j}(T-\la_{z,j})^{-1}.
\end{equation}
By Lemma~\ref{lem:HinftyNorm}, 
the norms of $\varphi_z(T)^{-1}$
are uniformly bounded. 
Since $k(z) \le n_0$ uniformly in $z$,  
it will be sufficient to check the estimate for each term in the sum: 
\begin{equation}\label{eq:estim.each.sum}
\|c_{z,j}(T-\la_{z,j})^{-1}\|\lesssim \dist(z,\fK)^{-s}, \quad z\in f(\Omtwobr)\sm f(K). 
\end{equation}

So, fix an index $j$. 
Notice that 
\begin{equation}
\label{abs-czj}
|c_{z,j}|=\prod_{k\ne j} |\la_{z,j}-\la_{z,k}|^{-1}.  
\end{equation}
We distinguish two cases. 

{\scshape Case 1:} $\la_{z,j}\notin \Ombr$. 
Then Lemma~\ref{Lemma:dist} implies that 
$|c_{z,j}|\le \max\{1,\de^{-n_0+1}\}$. 
Therefore, invoking Lemma~\ref{lem:bilipsch} 
we obtain 
\begin{align*}
\|c_{z,j}(T-\la_{z,j})^{-1}\|
&\lesssim 
\dist(\la_{z,j},\K)^{-s}\\
&\lesssim 
\dist(\la_{z,j},\wtK)^{-s}\\
&\lesssim 
\dist(z,\fK)^{-s}, \quad  z\in f(\Omtwobr)\sm f(K).
\end{align*} 

\smallskip

{\scshape Case 2:} $\la_{z,j}\in \Ombr$. 
Hence $\la_{z,j}\in W_k$ for some $k$. 
We recall that 
$W_k$ contains a branching point $\xi_k$ of $f$ of order $m=m_k$. 
Here we use a part of Bakaev's arguments. 
If $\lambda_{z,p}\notin W_k$, then $\abs{\lambda_{z,j}-\lambda_{z,p}}\ge\delta$. 
On the other hand, if $\lambda_{z,p}\in W_k$ with $p\ne j$ (there are exactly $m-1$ of these $\lambda_{z,p}$), then
\[
\abs{\la_{z,j}-\la_{z,p}}\asymp\abs{\la_{z,j}-\xi_k}, \quad z\in f(\Omtwobr)\sm f(K)
\] 
Therefore 
\[
|c_{z,j}|\asymp |\la_{z,j}-\xi_k|^{-m+1}, \quad z\in f(\Omtwobr)\sm f(K). 
\]
Next, let $\zeta_{z,j}$ be a point of $\K$ such that $\dist(\lambda_{z,j},K)=\abs{\lambda_{z,j}-\zeta_{z,j}}$. 
Since $\xi_k\in \K$, we have  
$|\zeta_{z,j}-\xi_k|\le 2 |\la_{z,j}-\xi_k|\le 2\diam W_j$. 
Recall that, by ~\eqref{eq:2diam}, $\cl D(\xi_k, 2\diam W_k)\ss\Om$. 
Thus we can estimate 
\begin{align*}
\dist(z,\fK)
&\le 
|f(\la_{z,j})-f(\zeta_{z,j})|  \\
&\le 
|\la_{z,j}-\zeta_{z,j}|\cdot \max_{\cl D(\xi_k, 2\diam W_k)}\,|f'| \\
&\lesssim 
\dist(\la_{z,j}, \K)\cdot |\la_{z,j}-\xi_k|^{m-1}, \quad z\in f(\Omtwobr)\sm f(K). 
\end{align*}
Hence, using that $s\ge 1$ and $m\ge 2$, we have 
\begin{align*}
\|c_{z,j}(T-\la_{z,j})^{-1}\|
&\lesssim 
|\la_{z,j}-\xi_k|^{-m+1}\dist(\la_{z,j}, \K)^{-s}  \\
&\lesssim 
|\la_{z,j}-\xi_k|^{-m+1}\dist(z, \fK)^{-s}|\la_{z,j}-\xi_k|^{s(m-1)} \\
&\lesssim 
\dist(z,\fK)^{-s}, \quad z\in f(\Omtwobr)\sm f(K). 
\end{align*}
Therefore \eqref{eq:estim.each.sum} holds, as we wanted to prove. 
\end{proof}

\begin{Lemma}\label{lem:BoundResolvent}
	If $z\in f(\Omega_2)\sm\sigma(f(T))$ and $z\notin f(\Ombr)$, then 
	\[
	\norm{(f(T)-z)^{-1}}\asymp\max\{\norm{(T-\la)^{-1}}:\la\in f^{-1}(z)\cap \Om_1\}. 
	\]
\end{Lemma}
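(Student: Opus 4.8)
The plan is to prove the two estimates hidden in ``$\asymp$'' separately. First I would collect a few elementary observations about the fixed point $z$ (with $z\in f(\Om_2)\sm\si(f(T))$ and $z\notin f(\Ombr)$). Since $z\notin f(\Ombr)$, no preimage of $z$ lies in $\Ombr$, so every preimage of $z$ belonging to $\Omonebr=\Om_1\cup\Ombr$ in fact lies in $\Om_1$; in particular none of them is a branching point, hence they are pairwise distinct, their number is $k(z)$, and $f^{-1}(z)\cap\Om_1=\{\la_{z,1},\dots,\la_{z,k(z)}\}$ in the notation of the proof of Theorem~A. This set is non-empty because $z\in f(\Om_2)$ and $\Om_2\ss\Om_1$. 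Finally, by the spectral mapping theorem $\si(f(T))=f(\si(T))$, so $z\notin\si(f(T))$ forces each $\la_{z,j}$ to lie outside $\si(T)$; thus $f(T)-z$ and all the $T-\la_{z,j}$ are invertible.

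For the inequality $\|(f(T)-z)^{-1}\|\lesssim\max_j\|(T-\la_{z,j})^{-1}\|$ I would simply reuse the computation in the proof of Theorem~A. Formula~\eqref{eq:fTz} remains valid here --- its derivation only uses that the preimages of $z$ in $\Omonebr$ are pairwise distinct and avoid $\si(T)$ --- and gives
\[
(f(T)-z)^{-1}=\varphi_z(T)^{-1}\sum_{j=1}^{k(z)}c_{z,j}(T-\la_{z,j})^{-1}.
\]
By Lemma~\ref{lem:HinftyNorm} (applicable since $z\in f(\Om_2)\ss f(\Omtwobr)$) the norms $\|\varphi_z(T)^{-1}\|$ are uniformly bounded; one has $k(z)\le n_0$; and since each $\la_{z,j}$ lies outside $\Ombr$, Lemma~\ref{Lemma:dist} forces $|\la_{z,j}-\la_{z,k}|\ge\de$ for $k\ne j$, so $|c_{z,j}|\le\max\{1,\de^{-n_0+1}\}$ by~\eqref{abs-czj} --- this is exactly Case~1 of the proof of Theorem~A. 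Combining these bounds yields the desired estimate.

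For the reverse inequality I would use a factorization trick. Fix $\la_0\in f^{-1}(z)\cap\Om_1$ and write
\[
f(\zeta)-z=f(\zeta)-f(\la_0)=(\zeta-\la_0)\,g_{\la_0}(\zeta),\qquad g_{\la_0}(\zeta):=\frac{f(\zeta)-f(\la_0)}{\zeta-\la_0},
\]
where $g_{\la_0}$ has a removable singularity at $\zeta=\la_0$ and hence is analytic on all of $\Om$. Applying the Riesz--Dunford calculus gives $f(T)-z=(T-\la_0)\,g_{\la_0}(T)=g_{\la_0}(T)\,(T-\la_0)$; since $f(T)-z$ and $T-\la_0$ are both invertible, so is $g_{\la_0}(T)$, and
\[
(T-\la_0)^{-1}=g_{\la_0}(T)\,(f(T)-z)^{-1}.
\]
It remains to bound $\|g_{\la_0}(T)\|$ by a constant not depending on $\la_0$ (nor on $z$). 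For that I would fix once and for all a bounded open set $\mathcal O$ with $\si(T)\ss\mathcal O\ss\ol{\mathcal O}\ss\Om$ whose boundary is a finite union of rectifiable closed curves, so that $g_\la(T)=\frac{1}{2\pi i}\int_{\pt\mathcal O}g_\la(\zeta)(\zeta-T)^{-1}\,d\zeta$ for every $\la\in\Om$. Now $(\la,\zeta)\mapsto g_\la(\zeta)$ is continuous on $\Om\times\Om$, hence bounded on the compact set $\clOmone\times\pt\mathcal O$, and $\sup_{\pt\mathcal O}\|(\zeta-T)^{-1}\|<\infty$ because $\pt\mathcal O$ is compact and disjoint from $\si(T)$; therefore $\|g_\la(T)\|\le C$ for all $\la\in\clOmone$. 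Applying this with $\la=\la_0$ and taking the maximum over the finitely many preimages $\la_0\in f^{-1}(z)\cap\Om_1$ gives $\max_{\la_0}\|(T-\la_0)^{-1}\|\le C\,\|(f(T)-z)^{-1}\|$, which is the missing inequality.

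The only genuinely delicate point is this last uniform bound: $g_{\la_0}(T)$ involves the resolvent of $T$, whose norm is not controlled near $\si(T)$, so one must not integrate against it over a contour that drifts toward $\si(T)$ as $z$ varies --- fixing a single ambient contour $\pt\mathcal O$ independent of $z$, together with the locally uniform continuity of $\la\mapsto g_\la$, is exactly what keeps the constant uniform. Everything else is a repackaging of estimates already present in the proof of Theorem~A.
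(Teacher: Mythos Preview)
Your proof is correct and takes essentially the same approach as the paper: both directions use the identical factorizations --- formula~\eqref{eq:fTz} with the Case~1 bound on $c_{z,j}$ for $\lesssim$, and $(T-\la)^{-1}=\psi_\la(T)(f(T)-z)^{-1}$ with $\psi_\la(t)=(f(t)-f(\la))/(t-\la)$ (your $g_{\la_0}$) for $\gtrsim$. The only cosmetic difference is that the paper obtains the uniform bound on $\|\psi_\la(T)\|$ via a uniform $H^\infty(\Omtwobr)$ estimate on $\psi_\la$ (using Lemma~\ref{lem:Lipschitz} and the maximum principle), whereas you bound the Riesz--Dunford integral directly over a fixed contour; these are two packagings of the same estimate.
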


\begin{proof}
The inequality 
\[
\norm{(f(T)-z)^{-1}}\lesssim
\max\{\norm{(T-\la)^{-1}}:\la\in f^{-1}(z)\cap \Om_1\}
\]
follows from identities~\eqref{eq:fTz} and ~\eqref{abs-czj}.  
Indeed, since $z\notin f(\Ombr)$, for each $\la=\la_{z,j}$ satisfying $\la\in f^{-1}(\{z\})\cap \Om_1$, Case 1 in the previous proof will take place. 
Therefore in~\eqref{eq:fTz}, $c_{z,j}$ are uniformly bounded. 

To prove the reverse inequality, take any  $\lambda\in\Omega_1$ and consider the function $\psi_\lambda\colon\Omega_2\to\C$, given by $\psi_\la(t)=(f(t)-f(\lambda))/(t-\la)$. 
It is easy to see that the functions $\psi_\la$ are analytic in $\Om_2$ and 
their norms in $H^\infty(\Om_2)$ are uniformly bounded by a finite constant. 
By the same argument as in Lemma~\ref{lem:HinftyNorm}, 
the norms of $\psi_\la$ in $H^\infty(\Omtwobr)$ are uniformly bounded by the same constant. 
Therefore, given $z\in f(\Omega_2)\sm\sigma(f(T))$ and 
$\la\in f^{-1}(z)\cap \Om_1$, the identity 
\[
(T-\la)^{-1}=\psi_\la(T)\big(f(T)-z\big)^{-1},\]
readily gives  
\[
\norm{(T-\la)^{-1}}\le C \norm{(f(T)-z)^{-1}},   
\]
as we wanted to prove. 
\end{proof}

\begin{proof}[Proof of Lemma \ref{Lemma-resolv-curves}]
	It is well-known that the function 
	\[
	u(\la):=
	\log \|(T-\la)^{-1}\|
	\] 
	is subharmonic on $\C\sm \siT$. 
	Put $A=\cl{(\wt K\sm K)}$ and $B=K$, then 
	$\wt K=A\cup B$. We apply the results of~\cite{BelloYak-curves}. 
	
	Under the hypothesis of (i), $A$ is contained 
	in a finite union of Lipschitz curves, so that 
	inequality~\eqref{eq:res-growth-K} and 
	~\cite[part (1) of Corollary 2.8]{BelloYak-curves}
	imply that $T$ has resolvent growth of order $s$ near $B=K$. 
	
	Under the hypothesis of (ii), $A$ is $p$-admissible for some $p<2$. Put $g(t)=s\log(1+1/t)$. 
	By~\eqref{eq:res-growth-K}, there is a constant $C_1$ such that $u(z)-C_1\le g(\dist(z,\wt K))$ for 
	$z\in D(b.R)\sm \wt K$. 
	Fix any $\si>s$. 
	By ~\cite[Theorem 2.5]{BelloYak-curves}, we get that 
	for any $a>1$ there is $v\in (0,1)$
	such that 
	$u(z)-C_1\le ag(v\dist(z, K))$
	for $z\in D(b.R)\sm K$. By applying it to 
	$a=\si/s$, we get that $T$ has $\si$th order growth near $K$- 
\end{proof}

\begin{proof}[Proof of Theorem~\ref{converse_thm}]
        First we observe that since $f'$ does not vanish on $K$, $\Ombr=\emptyset$ and $\Omtwobr=\Om_2$. Suppose $f(T)$ has resolvent growth of order $s$ near $K$. Then, by Lemmas~\ref{lem:BoundResolvent} and ~\ref{lem:bilipsch}, 
		\begin{align*}
			\norm{(T-\la)^{-1}}
			&\lesssim
			\norm{(f(T)-  f(\lambda))^{-1}}\\
			&\lesssim
			\dist(f(\la),\fK)^{-s}\\
			&\lesssim
			\dist(\la,\wtK)^{-s}, \quad  \la\in \Om_2. 
		\end{align*}
	
		First assume that (i) holds, that is, $K$ is contained in a finite union of Lipschitz curves. Then it is easy to see that $\wtK$ also is contained in a finite union of Lipschitz curves. 
		By applying part (i) of  Lemma~\ref{Lemma-resolv-curves}, we get that 
		\[
		\norm{(T-\la)^{-1}}
		\lesssim
		\dist(\la, \K)^{-s}, \quad \la\in\Om_2. 
		\]
		
		Now assume that (ii) holds, that is, $K$ is $p$-admissible for some $p\in (0,2)$. Then it is easy to see that $f(K)$ and hence $\wtK$ also are $p$-admissible. 
		By applying part (ii) of  Lemma~\ref{Lemma-resolv-curves}, now we get that for any $\si>s$, 
\[
\norm{(T-\la)^{-1}}
\lesssim
\dist(\la, \K)^{-\si}, \quad \la\in\Om_2. \qedhere 
\]		
	\end{proof}

\begin{example}\label{ex:dense-sp}
	Let $N$ be a compact normal operator on a Hilbert space, whose spectrum is contained in $\cl{D(0,1)}$ 
	and is so dense that 
	\[
	\dist(\la,\si(N))\le |\la|^3, \quad |\la|\le 1. 
	\]
	Let $T=N\oplus J$, where $J$ is the $2\times 2$ Jordan cell corresponding to eigenvalue $3$. Put $f(z)=z^2-3z$. 
	Then $f'\ne 0$ on $\si(T)$ and $T$ fails to have first order resolvent growth near $\si(J)=\{3\}$. However, $f(T)$ has first order resolvent growth. 
Indeed, 
\begin{align*}
\|(f(T)-zI)^{-1}\|
&=
\max\big(\dist(f(N),z)^{-1}, \|(f(J)-z)\|^{-1}\big)   \\
&\lesssim 
\dist(f(N),z)^{-1} \\
&\le 
\dist(f(T),z)^{-1}, \quad  |z|\le 5. 
\end{align*}
	Indeed, it is easy to see that 
	\[\dist(z,\si(N))\lesssim |z|^3, \quad |z|\le 1\] and that 
	$f(J)$ is a $2\times 2$ Jordan cell corresponding to eigenvalue $0$, so has second order resolvent growth.  
\end{example}

This example shows that the geometric requirements on $\siT$ in Theorem~\ref{converse_thm} cannot be dropped.

\bibliographystyle{siam}
\bibliography{biblio_completeness}
\end{document}